\tikzset{individu/.style={draw,thick}}
\theoremstyle{plain}
\newtheorem{theorem}{Theorem}[section]
\newtheorem{fact}[theorem]{Fact}
\theoremstyle{definition}
\theoremstyle{remark}
\newcommand{\N}{\mathbb{N}}
\newcommand{\R}{\mathbb{R}}
\newcommand{\ind}[1]{\mathbf{1}_{\left\{#1\right\}}}
\numberwithin{equation}{section}
\DeclareMathOperator{\E}{\mathbb{E}}
\renewcommand{\P}{\mathbb{P}}
\newcommand{\calF}{\mathcal{F}}
\title{A short proof of the asymptotic of the minimum of the branching random walk after time $n$}
\author{Bastien Mallein}
\date{\today}
\newcommand{\T}{\mathbf{T}}
\renewcommand{\rho}{\varrho}
\renewcommand{\epsilon}{\varepsilon}
\begin{document}

\maketitle

\begin{abstract}
We write $R_n$ for the minimal position attained after time $n$ by a branching random walk in the boundary case. In this article, we prove that $R_n - \frac{1}{2} \log n$ converges in law toward a shifted Gumbel distribution.
\end{abstract}

\section{Introduction}

A branching random walk on $\R$ is particle system defined as follows. It starts with a unique individual located at position $0$ at time $0$. At each time $n \in \N$, every individual currently alive in the process dies, giving birth to children that are positioned around their parent according to i.i.d. versions of a point process. We write $\T$ for the genealogical tree of the process. For any $u \in \T$, we denote by $V(u)$ the position of individual $u$ and $|u|$ the generation to which $u$ belongs. The branching random walk is the random marked tree $(\T,V)$. In this article, we take interest in the asymptotic behaviour of the smallest position reached after time $n$, defined by
\begin{equation}
  \label{eqn:defrn}
  R_n = \inf_{u \in \T, |u| \geq n} V(u).
\end{equation}

We assume that the branching random walk is in the boundary case:
\begin{equation}
  \label{eqn:boundary}
  \E\left( \sum_{|u|=1} 1 \right) > 1, \text{ } \E\left( \sum_{|u|=1} e^{-V(u)} \right) = 1 \text{ and }  \E\left( \sum_{|u|=1} V(u)e^{-V(u)} \right) = 0,
\end{equation}
and that the displacement law is non-lattice. It is well-known (see e.g. the discussions in \cite{BeG11,Jaf11}) that under mild integrability assumptions, a branching random walk can be mapped with a branching random walk in the boundary case by an affine transformation. We also assume the following, classical, integrability assumptions on the reproduction law of the process
\begin{equation}
  \label{eqn:variance}
  \sigma^2 := \E\left( \sum_{|u|=1} V(u)^2 e^{-V(u)} \right) \in (0,+\infty)
\end{equation}
\begin{equation}
  \label{eqn:integrability}
  \E\left( \sum_{|u|=1} e^{-V(u)} \log_+ \left( \sum_{|u|=1} (1 + V(u)_+)e^{-V(u)} \right)^2 \right) < +\infty.
\end{equation}
These conditions replace, in some sense, the $L \log L$ integrability condition for the Galton-Watson process.

Under assumption \eqref{eqn:boundary}, the Galton-Watson tree $\T$ is supercritical, and the survival event $\{ \# \T = +\infty\}$ occurs with positive probability. For any $n \in \N$, we introduce
\begin{equation}
  \label{eqn:martingales}
  W_n = \sum_{|u|=n} e^{-V(u)} \quad \text{and} \quad Z_n = \sum_{|u|=n} V(u)e^{-V(u)}.
\end{equation}
By \eqref{eqn:boundary} and the branching property of the branching random walk, $(W_n)$ and $(Z_n)$ are martingales. They are respectively called the additive martingale and the derivative martingale. Under assumptions \eqref{eqn:variance} and \eqref{eqn:integrability}, there exists a random variable $Z_\infty$ which is a.s. positive on $S$ such that
\begin{equation}
  \label{eqn:defZinfty}
  \lim_{n \to +\infty} W_n = 0 \quad \text{and} \quad \lim_{n \to +\infty} Z_n = Z_\infty \quad \text{a.s.}
\end{equation}
This result was proved in \cite{BiK04} under stronger integrability assumptions, then extended in \cite{Aid13}. It is proved in \cite{Che15} that assumption \eqref{eqn:integrability} is necessary and sufficient for \eqref{eqn:defZinfty} to hold. Aïdékon and Shi \cite{AiS10} obtained the precise asymptotic behaviour of $W_n$. This result is recalled in Fact \ref{fct:ais}.

The asymptotic behaviour of the extremal individuals in the branching random walk has been an object of interest in the recent years. We denote by $M_n = \min_{|u|=n} V(u)$ the minimal displacement at time $n$. Hammersley \cite{Ham74}, Kingman \cite{Kin75} and Biggins \cite{Big76} proved that $\frac{M_n}{n}$ converges almost surely to 0. The second order has been obtained independently by Addario-Berry and Reed \cite{ABR09}, who proved that $M_n - \frac{3}{2} \log n$ is tight, and by Hu and Shi \cite{HuS09}, who obtained that $\frac{M_n}{\log n}$ converges in probability toward $\frac{3}{2}$, while experiencing almost sure fluctuations. Finally, Aïdékon \cite{Aid13} completed the picture by proving the convergence in law of $M_n - \frac{3}{2} \log n$. A more precise statement of this result is given in Fact \ref{fct:aid}.

In this article, we study the asymptotic behaviour of $R_n = \min_{k \geq n} M_k$, the lowest position reached after time $n$. Using previous sharp estimates on the branching random walks from \cite{Aid13,AiS10,Mad16}, we compute the joint convergence in law of $Z_n$, $M_n$ and $R_n$.
\begin{theorem}
\label{thm:main}
Under assumptions \eqref{eqn:boundary}, \eqref{eqn:variance} and \eqref{eqn:integrability}, we have
\[
  \lim_{n \to +\infty} \left(Z_n,M_n-\tfrac{3}{2} \log n, R_n - \tfrac{1}{2}\log n\right) = (Z_\infty, W,L) \quad \text{in law,}
\]
where $\P(W \geq x,L \geq y|Z_\infty) = \exp\left( - c_* Z_\infty e^x - c' Z_\infty e^y \right)$, $c_*$ is the constant defined in Fact \ref{fct:aid}, $c_M$ the constant defined in Fact \ref{fct:mad} and $c' = \sqrt{\frac{2}{\pi \sigma^2}} c_M$.
\end{theorem}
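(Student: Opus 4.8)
The plan is to leverage the known sharp asymptotics for $M_n$ (Fact \ref{fct:aid}) and for $W_n$ (Fact \ref{fct:ais}), together with the ballot-type estimates of \cite{Mad16} encoded in the constant $c_M$ of Fact \ref{fct:mad}, and to show that $R_n$ is essentially determined by the behaviour of the process \emph{slightly} after time $n$. More precisely, for large $n$ the minimum after time $n$ is achieved at a generation $k$ with $k - n = O(1)$ relative to the exponential tilting used in the spine decomposition, so that after conditioning on $\calF_n$ the quantity $e^{-(R_n - \frac12 \log n)}$ behaves, to leading order, like a sum over the particles $u$ alive at time $n$ of independent contributions, each of which is asymptotically a Poissonian functional of the derivative martingale increment $V(u) e^{-V(u)}$. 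This should produce the exponential-in-$e^y$ form of the limiting law with the stated constant $c' = \sqrt{2/(\pi \sigma^2)} c_M$, the factor $\sqrt{2/(\pi\sigma^2)}$ coming from the local central limit / Gaussian normalization of the time shift.

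Concretely, I would first fix $y \in \R$ and show that
\[
  \P\bigl( R_n \geq \tfrac12 \log n + y \bigm| \calF_{p}\bigr) \approx \exp\Bigl( - c' e^{y} \sum_{|u|=p} V(u) e^{-V(u)} \Bigr)
\]
for $n \to \infty$ and then $p \to \infty$, by decomposing $R_n = \min_{k \geq n} M_k$ along the subtrees rooted at time $p$ and using the branching property; the main input is a one-particle (or one-subtree) estimate of the form $\P_x(R_n \leq \tfrac12\log n + y) \sim c' e^{y} x e^{-x} n^{-1/2} \cdot (\text{something})$, which I expect to extract from \cite{Aid13,AiS10,Mad16} by summing the time-$k$ minimum estimates over $k \geq n$ and recognizing the resulting sum as a Riemann sum converging to a Gaussian integral — this is where the $\sqrt{2/(\pi\sigma^2)}$ appears. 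Simultaneously one gets the analogous statement for $M_n - \tfrac32\log n$ (which is Fact \ref{fct:aid}) and, crucially, the \emph{joint} statement, using that the particle realizing $M_n$ and the particle realizing $R_n$ are with high probability descended from distinct time-$p$ particles (or, if from the same one, contribute independently in the limit by a decorrelation argument), so that the two exponential factors multiply and the conditional law factorizes as claimed.

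The second half of the argument is to promote the conditional-on-$\calF_p$ convergence to an unconditional joint convergence in law of the triple $(Z_n, M_n - \tfrac32\log n, R_n - \tfrac12\log n)$. Here I would use that $Z_p \to Z_\infty$ a.s. and $W_p \to 0$ a.s., together with a uniform-integrability / truncation argument controlling the particles with very small or very large displacement at time $p$ (this is exactly what assumptions \eqref{eqn:variance} and \eqref{eqn:integrability} are for), to replace $\sum_{|u|=p} V(u)e^{-V(u)}$ by $Z_\infty$ in the exponent and pass to the limit $p \to \infty$ after $n \to \infty$. A diagonal/interchange-of-limits lemma (continuity of the map and a tightness estimate) then yields the stated formula $\P(W \geq x, L \geq y \mid Z_\infty) = \exp(-c_* Z_\infty e^x - c' Z_\infty e^y)$.

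The hard part will be the joint estimate: showing that the contributions to $M_n$ and to $R_n$ decorrelate in the limit, i.e. that conditionally on $Z_\infty$ the pair $(W, L)$ has \emph{product} exponential intensities with no cross term. One has to rule out that the same near-minimal trajectory both attains $M_n$ and stays low after time $n$ in a way that couples the two extremes; I expect this to follow from the fact that the extremal trajectory for $M_n$ has, with high probability, a strictly increasing ``excursion'' away from its time-$n$ minimum (by the ballot estimates), so that its continuation after time $n$ is pushed up by $\Theta(\log n)$ and hence cannot compete for $R_n - \tfrac12\log n$; quantifying this cleanly, uniformly in the conditioning, is the technical crux.
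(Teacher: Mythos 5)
Your overall target formula is right, and some structural elements (a branching decomposition, conditioning on a generation, an exchange of limits) are in the right spirit, but the route you propose has a genuine gap at its core, and the key simplification that makes the theorem a one-page corollary of known results is missing. Your plan decomposes at a \emph{fixed} generation $p$ and therefore needs, as its main input, a one-subtree asymptotic of the form $\P_x(R_n \leq \tfrac12\log n + y) \sim c' e^{y} x e^{-x} n^{-1/2}$. You propose to extract this by ``summing the time-$k$ minimum estimates over $k \geq n$'' as a Riemann sum. This fails quantitatively: since $\P(Z_\infty > t) \sim c/t$ (so $\E Z_\infty = +\infty$), the left tail of the centred minimum carries a polynomial correction, $\P(M_k \leq \tfrac32\log k - z) \asymp z e^{-z}$, and plugging $z = \tfrac32 \log k - \tfrac12 \log n - y$ gives $\sum_{k \geq n} \P(M_k \leq \tfrac12\log n + y) \asymp e^y \log n \to +\infty$. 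The events $\{M_k \leq \tfrac12\log n + y\}$ for different $k$ overlap massively (once a line of descent dips low it stays low for many generations), so the union/first-moment bound over $k$ is off by a divergent factor and cannot produce the constant $c'$. Your heuristic that the post-$n$ minimum is attained at $k - n = O(1)$ is also incorrect: the particles carrying $W_n$ sit at height of order $\sqrt{n}$, and their subtrees need time of order $n$ to descend to $\tfrac12\log n + O(1)$. Establishing the one-subtree estimate correctly would essentially amount to reproving (a harder, $n$-dependent version of) Madaule's theorem.

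The paper avoids all of this by decomposing at time $n$ itself: writing $R_n = \min_{|u|=n}\bigl( V(u) + \min_{v > u}(V(v)-V(u))\bigr)$, the branching property makes the inner minima i.i.d. copies of $R_0$, independent of $\calF_n$, so that $\P(R_n \geq \tfrac12\log n + x, M_n \geq \tfrac32\log n + y \mid \calF_n) = \ind{M_n \geq \frac32\log n + y}\prod_{|u|=n}\phi(\tfrac12\log n + x - V(u))$ with $\phi(z) = \P(R_0 \geq z)$. Fact \ref{fct:mad} is then used directly as the tail of $R_0$ (not as an ingredient to be re-derived), the product becomes $\exp(-c_M e^x n^{1/2}W_n(1+o(1)))$ uniformly on $\{M_n \geq \tfrac32\log n + y\}$ (every argument of $\phi$ is $\leq x - y - \log n$ there), and Fact \ref{fct:ais} supplies $n^{1/2}W_n \to \sqrt{2/(\pi\sigma^2)}\,Z_\infty$ in probability --- that is where $\sqrt{2/(\pi\sigma^2)}$ comes from, not from a Gaussian time integral. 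Note also that the decorrelation of $W$ and $L$, which you flag as the technical crux, is automatic in this decomposition: $\{M_n \geq \tfrac32\log n + y\}$ is $\calF_n$-measurable and the conditional probability of $\{R_n \geq \tfrac12\log n + x\}$ given $\calF_n$ concentrates on a deterministic function of $Z_\infty$, so the factorized form of the limit requires no argument about whether the same trajectory realizes both extremes.
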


As a side result, we observe that
\begin{equation}
  \label{eqn:firstorder}
  \lim_{n \to +\infty} \frac{R_n}{\log n} = \frac{1}{2} \quad \text{a.s. on the event} \quad \{ \# \T = +\infty\}.
\end{equation}
In particular, we observe that $\frac{R_n}{\log n}$ does not fluctuates almost surely at scale $n$, contrarily to $M_n$ the smallest displacement at time $n$ (see \cite{HuS09}).

\section{Proof of Theorem \ref{thm:main}}

In a first time, we recall some branching random walk estimates that we use to prove Theorem \ref{thm:main}. We first give a precise statement for the convergence in law of $M_n - \frac{3}{2} \log n$.
\begin{fact}[Aïdékon \cite{Aid13}]
\label{fct:aid}
Under assumptions \eqref{eqn:boundary}, \eqref{eqn:variance} and \eqref{eqn:integrability} we have
\[
  \lim_{n \to +\infty} \left(Z_n,M_n-\tfrac{3}{2} \log n\right) = (Z_\infty, W) \quad \text{in law,}
\]
where $\P(W \geq x|Z_\infty) = \exp\left( - c_* Z_\infty e^x \right)$ and $c_*$ is a fixed constant, that depends only on the reproduction law of the branching random walk.
\end{fact}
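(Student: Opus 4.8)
The plan is to prove the result by establishing directly that the survival function $\P\bigl(M_n \ge \tfrac{3}{2}\log n + x\bigr)$ converges, jointly with $Z_n$, to $\E\left[\exp(-c_* Z_\infty e^x)\right]$, and to identify the limit through the point process of particles sitting near the minimum. Writing $m_n = \tfrac{3}{2}\log n$, the heuristic I would make rigorous is that the recentered low particles $\{V(u)-m_n : |u|=n\}$ converge, conditionally on $Z_\infty$, to a Cox (doubly stochastic Poisson) process on $\R$ with intensity $c_* Z_\infty e^x\,dx$. Since the number of such points below a level $x$ has conditional mean $\int_{-\infty}^x c_* Z_\infty e^y\,dy = c_* Z_\infty e^x$, the event $\{M_n - m_n \ge x\}$ is asymptotically the event that this process charges no point below $x$, whose probability is exactly $\exp(-c_* Z_\infty e^x)$. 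The whole argument thus reduces to a first- and second-moment analysis of the number of low particles, together with the identification of the directing measure as a multiple of $Z_\infty$.

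First I would set up the spinal decomposition. Using the additive martingale $W_n$ as a change of measure (the many-to-one lemma), expectations of sums over particles following a prescribed trajectory are computed against a single centered random walk $(S_k)_{k\ge 0}$ whose increments have mean $0$ (by the boundary condition \eqref{eqn:boundary}) and variance $\sigma^2$, via $\E\bigl[\sum_{|u|=n} g(V(u_1),\dots,V(u_n))\bigr] = \E\bigl[e^{S_n} g(S_1,\dots,S_n)\bigr]$, where $u_1,\dots,u_n$ denotes the ancestral line of $u$. The $m_n=\tfrac{3}{2}\log n$ normalization then emerges from a ballot/Mogulskii estimate: the probability that a centered walk of length $n$ stays above a slowly growing barrier and ends near the low level $m_n + x$ decays like $n^{-3/2}$, and balancing this against the branching weight $e^{S_n}$ pins the typical lowest level at $\tfrac{3}{2}\log n$. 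Renewal theory for the walk conditioned to stay nonnegative, through its harmonic (renewal) function $R$, produces the first-moment limit $c_* Z_\infty e^x$; the factor $Z_\infty$ appears because $\sum_{|u|=k} R(V(u))e^{-V(u)}$ is a martingale whose limit, since $R(x)\sim c\,x$, is proportional to $Z_\infty$.

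The heart of the argument, and the step I expect to be the main obstacle, is the second moment and the resulting Poisson convergence. A naive second moment of the number of low particles diverges, because two particles sharing a recent common ancestor are strongly correlated: once one of two close cousins dips low, the other does so with comparable probability. The standard remedy, which I would follow, is truncation: count only particles whose entire ancestral trajectory stays above a suitable lower envelope. This removes the dangerous short-range correlations without changing the first moment to leading order, and it allows a truncated second moment (or a Chen--Stein coupling) to show that the truncated count of low particles is asymptotically Poisson with the intensity computed above, conditionally on the early structure of the tree. Passing to the limit then yields the Cox process directed by $Z_\infty$, whose lowest atom is $M_n - m_n$.

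Finally, I would upgrade the one-dimensional convergence of $M_n - m_n$ to the joint convergence with $Z_n$. Since $Z_n \to Z_\infty$ almost surely and the limiting intensity is a deterministic multiple of $Z_\infty$, it suffices to show that the position of the minimum is, asymptotically, determined by the branching random walk after a large but fixed generation $k$, conditionally on which the subtrees evolve as independent copies; letting $k\to\infty$ and using $Z_k\to Z_\infty$ then produces the conditional law $\P(W\ge x\mid Z_\infty)=\exp(-c_* Z_\infty e^x)$ and the asserted joint convergence. Throughout, the delicate points are the uniform-in-$x$ control of the barrier estimates and the verification, under the integrability hypotheses \eqref{eqn:variance} and \eqref{eqn:integrability}, that the truncation error and the second-moment remainder are negligible.
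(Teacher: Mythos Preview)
The paper does not prove this statement at all: it is recorded as a \emph{Fact} and attributed to A\"id\'ekon \cite{Aid13}, with the remark that the joint convergence of $(Z_n,M_n)$ (rather than just $M_n$) follows immediately from A\"id\'ekon's proof, or alternatively from \cite[Theorem~1.1]{Mad15}. There is no argument to compare against; the paper simply imports the result as a black box for use in the proof of Theorem~\ref{thm:main}.

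Your proposal, by contrast, outlines a from-scratch proof of A\"id\'ekon's theorem. As a high-level roadmap it is broadly faithful to how such results are proved in the literature: spinal change of measure and many-to-one, ballot-type estimates yielding the $\tfrac{3}{2}\log n$ correction, a truncation along the ancestral line to tame second moments, and identification of the limiting intensity as a multiple of $Z_\infty$. That said, it remains a sketch rather than a proof: the hard content lies precisely in the steps you flag as ``delicate'' (uniform barrier estimates, the truncated second moment, and the passage from approximate Poissonian behaviour conditioned on generation $k$ to the Cox limit), and each of these requires substantial work under the minimal hypotheses \eqref{eqn:variance}--\eqref{eqn:integrability}. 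For the purposes of the present paper, none of this is needed; citing \cite{Aid13} (and \cite{Mad15} for the joint statement) is both standard and sufficient.
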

More precisely, in \cite[Theorem 1.1]{Aid13} we have only the convergence in law of $M_n - \frac{3}{2} \log n$ , but the joint convergence of $(Z_n,M_n)$ follows immediately from the proof. This result can also be obtained as a straightforward consequence of \cite[Theorem 1.1]{Mad15}.

By \eqref{eqn:defZinfty}, we have $\lim_{n \to +\infty} W_n = 0$ a.s. Aïdékon and Shi \cite{AiS10} computed the rate of decay for this quantity.
\begin{fact}[Aïdékon and Shi \cite{AiS10}]
\label{fct:ais}
Under assumptions \eqref{eqn:boundary}, \eqref{eqn:variance} and \eqref{eqn:integrability}, we have
\[
  \lim_{n \to +\infty} n^{1/2} W_n = \sqrt{\frac{2}{\pi \sigma^2}} Z_\infty \quad \text{in probability.}
\]
\end{fact}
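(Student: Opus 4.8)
The plan is to combine the spinal (many-to-one / many-to-two) decomposition with a truncation that tames the lack of uniform integrability of $(W_n)$, and a conditional second-moment estimate whose normalising constant is supplied by the survival probability of a centred random walk. The first thing to notice is an obstruction: the martingale property gives $\E(W_n)=1$, hence $\E(n^{1/2}W_n)=n^{1/2}\to+\infty$. Thus $n^{1/2}W_n$ is \emph{not} bounded in $L^1$, its expectation being entirely carried by atypical trajectories that descend far below $0$; no moment method can be applied to $W_n$ directly, and a truncation removing these low excursions is unavoidable.

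I would therefore work with a truncated martingale
\[
  \bar W_n \;=\; \sum_{|u|=n} e^{-V(u)}\, \ind{\underline V(u) \ge -\alpha_n}, \qquad \underline V(u) := \min_{0 \le j \le n} V(u_j),
\]
with a barrier $\alpha_n \to +\infty$ growing slowly (e.g.\ $\alpha_n = o(\sqrt n)$). The delicate point is the tuning of $\alpha_n$: it must grow fast enough that the discarded mass $n^{1/2}(W_n - \bar W_n)$ tends to $0$ in probability (the full $L^1$-mass being recovered only in the limit $\alpha_n\to+\infty$), yet slowly enough that the constrained random walk estimates feeding the second moment stay tractable. Negligibility of the discarded part would be established through barrier/ballot estimates together with the already known convergence $Z_n \to Z_\infty$.

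The behaviour of $\bar W_n$ is read off the many-to-one lemma, which, after the $e^{-V(\cdot)}$ weighting, expresses expectations of sums over generation $n$ as expectations for a single centred random walk $(S_k)_{k \ge 0}$ of mean $0$ and variance $\sigma^2$. The rate $n^{-1/2}$ and the constant then originate from the classical survival asymptotic
\[
  \P_x\!\left(\min_{1 \le j \le n} S_j \ge 0\right) \;\sim\; \mathcal{V}(x)\,\sqrt{\tfrac{2}{\pi \sigma^2 n}} \qquad (n \to +\infty),
\]
where $\mathcal{V}$ is the renewal function associated with the walk killed below $0$, which is harmonic for the killed walk and asymptotically linear, $\mathcal{V}(x) \sim c\,x$. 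It is precisely this survival constant $\sqrt{2/(\pi\sigma^2)}$ that appears in the statement, while the asymptotic linearity of $\mathcal{V}$ is what will turn the weights $V(u)e^{-V(u)}$ into the derivative martingale.

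To promote this first-moment heuristic into an almost deterministic limit, I would condition on the first $k$ generations $\calF_k$ and apply the many-to-one and many-to-two lemmas to the descendant subtrees. The conditional first moment of $n^{1/2}\bar W_n$ converges, as $n\to+\infty$ and then $k,\alpha_n\to+\infty$, to a constant multiple of $\sum_{|u|=k} V(u)e^{-V(u)}=Z_k$, hence to $\sqrt{2/(\pi\sigma^2)}\,Z_\infty$, the normalisation being exactly the one dictated by the two displays above. The heart of the argument is to show that the conditional variance vanishes. By the many-to-two lemma this reduces to controlling, for each generation $j$ at which a pair of particles splits from its common ancestor, the contribution of the two ensuing constrained subtrees; one must prove that the mass concentrates on small $j$, where the two subtrees decorrelate and the second moment factorises as the square of the first, while the late-split terms are negligible. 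Bounding these late-split contributions, which demands sharp local-limit estimates for the walk conditioned to stay non-negative and uses the integrability assumption \eqref{eqn:integrability} to control the reproduction, is the main obstacle. Once the conditional variance is shown to be $o(1)$, Chebyshev's inequality yields the convergence of $n^{1/2}\bar W_n$ to $\sqrt{2/(\pi\sigma^2)}\,Z_\infty$, and combining this with the truncation estimate of the second paragraph gives the announced convergence in probability of $n^{1/2}W_n$.
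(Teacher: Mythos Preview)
The paper does not prove this statement: it is recorded as Fact~\ref{fct:ais} and attributed to A\"{\i}d\'ekon and Shi~\cite{AiS10} without argument, being used only as a black box in the proof of Theorem~\ref{thm:main}. There is therefore no proof in the paper to compare your proposal against.

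That said, your outline is a faithful sketch of the method in the cited reference: truncate $W_n$ by discarding particles whose ancestral trajectory dips below a slowly diverging barrier, identify the first moment via the many-to-one lemma and the ballot-type asymptotic $\P_x(\min_{j\le n}S_j\ge 0)\sim \mathcal V(x)\sqrt{2/(\pi\sigma^2 n)}$, and control the conditional second moment through a many-to-two decomposition indexed by the splitting generation of a pair of particles. The delicate point you flag, namely that the late-split terms require sharp local estimates for the walk conditioned to stay nonnegative together with the integrability hypothesis~\eqref{eqn:integrability}, is exactly where the technical effort lies. Nothing in your plan is wrong; it simply reproduces the argument of~\cite{AiS10} rather than anything appearing in the present paper.
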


Finally, Madaule \cite{Mad16} obtained a precise estimate for the left tail asymptotic of $R_0$ the minimal position attained by the branching random walk.
\begin{fact}[Madaule \cite{Mad16}]
\label{fct:mad}
Under assumptions \eqref{eqn:boundary}, \eqref{eqn:variance} and \eqref{eqn:integrability}, there exists $c_M>0$ such that $\lim_{x \to +\infty} e^{x} \P(R_0 \leq -x) = c_M$.
\end{fact}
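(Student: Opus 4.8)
The plan is to control the left tail of $R_0 = \inf_{u \in \T} V(u)$ through the stopping line of \emph{first crossers} of the level $-x$, and to extract the exact constant by factoring the probability into a first-moment term and a ``cluster size'' term. For $x > 0$ set
\[
  \calL_x = \left\{ u \in \T : V(u) \le -x \text{ and } V(u|_k) > -x \text{ for all } k < |u| \right\}, \qquad N_x = \# \calL_x .
\]
Since $M_n \to +\infty$ a.s.\ and $V(\text{root}) = 0$, the infimum defining $R_0$ is attained, so $\{R_0 \le -x\} = \{N_x \ge 1\}$ and $h(x) := \P(R_0 \le -x) = \P(N_x \ge 1)$. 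The starting point is the elementary identity
\[
  h(x) = \frac{\E[N_x]}{\E\!\left[N_x \mid N_x \ge 1\right]},
\]
valid whenever $h(x) > 0$. I would prove the statement by showing that $e^x \E[N_x]$ converges to a finite positive constant $\ell$ (a first-moment computation) and that the conditional mean cluster size $\E[N_x \mid N_x \ge 1]$ converges to a finite limit $m_\infty \ge 1$; then $c_M = \ell / m_\infty \in (0,+\infty)$.

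For the numerator I would use the many-to-one lemma associated with the change of measure $e^{-V(u)}$, which under \eqref{eqn:boundary} has total mass one and produces a centered random walk $(S_k)_{k \ge 0}$ with $S_0 = 0$ and increment variance $\sigma^2 \in (0,\infty)$ by \eqref{eqn:variance}. Writing $\tau_x = \inf\{k \ge 0 : S_k \le -x\}$ and $O_x = -x - S_{\tau_x} \ge 0$ for the first passage time below $-x$ and the associated overshoot, the many-to-one lemma applied along the first-crossing stopping line gives
\[
  \E[N_x] = \E\!\left[ e^{S_{\tau_x}} ; \tau_x < +\infty \right] = e^{-x}\, \E\!\left[ e^{-O_x} \right].
\]
Because $(S_k)$ is a non-degenerate centered walk it oscillates, so $\tau_x < +\infty$ a.s.; and since $\sigma^2 < \infty$ the strict descending ladder height has finite mean, so by the renewal theorem (using that the displacement law is non-lattice) $O_x$ converges in distribution to the stationary overshoot $O_\infty$. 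Hence $e^x \E[N_x] \to \ell := \E[e^{-O_\infty}] \in (0,+\infty)$, which already yields the order $e^{-x}$ and the bound $\limsup_{x \to +\infty} e^x h(x) \le \ell$.

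The substance of the proof is the convergence of the conditional cluster size, for which I would analyse the branching random walk seen from the level of its deep crossings. Conditionally on $\{N_x \ge 1\}$, the ancestral path leading to a crosser, its overshoot, and the extra particles emitted near the bottom of this descent together make up the crossing cluster $\calL_x$; by fluctuation theory for $(S_k)$ conditioned to make a large descent, combined with the branching structure carried by the spine, this local picture viewed from $-x$ should converge as $x \to +\infty$ to a limiting point process whose expected number of points is $m_\infty$. The crucial and most delicate point is the finiteness $m_\infty < +\infty$: one must rule out that deep crossings are typically realised by a diverging number of particles crossing together. I expect this to be exactly where the sharp assumption \eqref{eqn:integrability} enters, through a second-moment (many-to-two) estimate bounding the expected number of near-threshold descendants that also cross; this is the main obstacle. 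Granting the convergence $\E[N_x \mid N_x \ge 1] \to m_\infty \in [1,+\infty)$, the identity above gives $e^x h(x) \to c_M = \ell/m_\infty$, and the positivity of $c_M$ follows from $m_\infty < +\infty$, completing the proof.
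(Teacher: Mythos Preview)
The paper does not prove this statement: Fact~\ref{fct:mad} is quoted directly from Madaule~\cite{Mad16} and no argument is supplied here, so there is no in-paper proof against which to compare your proposal.

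As to the proposal itself: the decomposition via the first-crossing stopping line $\calL_x$ and the identity $h(x)=\E[N_x]/\E[N_x\mid N_x\ge 1]$ is a natural route, and your first-moment computation is correct and standard --- many-to-one turns $\E[N_x]$ into $e^{-x}\E[e^{-O_x}]$, the centered finite-variance walk oscillates so $\tau_x<+\infty$ a.s., the descending ladder height has finite mean, and the non-lattice renewal theorem gives convergence of the overshoot. This already yields the order $e^{-x}$ and the upper bound $\limsup e^x h(x)\le \ell$.

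However, what you have written is an outline, not a proof. You explicitly ``grant'' the convergence $\E[N_x\mid N_x\ge 1]\to m_\infty\in[1,+\infty)$, which is exactly the substantive content of the result. Your heuristic (spinal decomposition, convergence of the local picture seen from the crossing level, a many-to-two bound to control the number of near-threshold siblings that also cross) is plausible in spirit, but no actual argument is given for either the existence of the limit or, more importantly, its finiteness. Showing $m_\infty<+\infty$ under only~\eqref{eqn:integrability} is the delicate point of Madaule's paper, and invoking ``a second-moment estimate'' does not settle it; one needs genuine control on how many particles can cross $-x$ nearly simultaneously with the first crosser, uniformly in $x$. As it stands, the central difficulty is identified but not resolved.
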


Using these two additional results, we are able to compute the joint asymptotic behaviour of $Z_n, M_n$ and $R_n$.
\begin{proof}[Proof of Theorem \ref{thm:main}]
For any $n \in \N$, we denote by $\calF_n = \sigma(u,V(u), |u| \leq n)$. For any $x,y \in \R$, we write
\[ r_n(x) = \frac{1}{2} \log n + x \quad \text{and} \quad m_n(y) = \frac{3}{2} \log n + y.\]
For any $u,v \in \T$, we write $u<v$ if $u$ is an ancestor of $v$. We observe that
\[
  R_n = \min_{|u| \geq n} V(u) = \min_{|u|=n} \left( V(u) + \min_{v>u} V(v)-V(u) \right),
\]
where, by the branching property $(\min_{v> u} V(v)-V(u), |u|=n)$ are i.i.d. copies of $R_0$ that are independent with $\calF_n$. Consequently, we have
\[
  \P\left( R_n \geq r_n(x), M_n \geq m_n(y) \middle| \calF_n \right) = \ind{M_n \geq m_n(y)} \prod_{|u|=n} \phi(r_n(x)-V(u)) \quad \text{a.s.},
\]
where we set $\phi(z) = \P(R_0 \geq z)$. By Fact \ref{fct:mad}, for any $\epsilon>0$, there exists $A>0$ such that for any $z \leq - A$, we have
\[
 1- (c_M + \epsilon) e^z \leq \phi(z) \leq 1 - (c_M - \epsilon) e^z.
\]

Consequently, for any $n \geq e^{y-x+A}$, we have a.s. on the event $\{M_n \geq m_n(y)\}$,
\begin{multline*}
  \P\left( R_n \geq r_n(x), M_n \geq m_n(y) \middle| \calF_n \right) \geq \prod_{|u|=n} \left( 1 - (c_M+\epsilon)e^{r_n(x)-V(u)}\right)\\
  \geq \exp\left( - \sum_{|u|=n} (c_M + \epsilon)e^{r_n(x)-V(u)}\right)
  = \exp\left( - (c_M+\epsilon)e^x n^{1/2}W_n \right).
\end{multline*}
Moreover, there exists $\delta>0$ such that $(1-h) \leq e^{-(1-\epsilon)h}$ for any $0 \leq h < \delta$. Therefore, a.s. on the event $\{M_n \geq m_n(y)\}$ we have for any $n$ large enough,
\begin{align*}
  \P\left( R_n \geq r_n(x), M_n \geq m_n(y) \middle| \calF_n \right) &\leq \prod_{|u|=n} \left( 1 - (c_M-\epsilon)e^{r_n(x)-V(u)}\right)\\
  &\leq \exp\left( - (1-\epsilon)(c_M+\epsilon)e^x n^{1/2}W_n \right).
\end{align*}

By Fact \ref{fct:ais} and Slutsky's lemma, we can extend Fact \ref{fct:aid} into
\[
  \lim_{n \to +\infty} (n^{1/2}W_n, Z_n, M_n - m_n(0)) = \left( \sqrt{\frac{2}{\pi \sigma^2}} Z_\infty, Z_\infty,W \right)
\]
As a consequence, for any continuous bounded function $\phi$, letting $n \to +\infty$ then $\epsilon \to 0$, we have
\[
  \lim_{n \to +\infty} \E\left( \phi(Z_n) \ind{M_n \geq m_n(y),R_n \geq r_n(x)} \right) = \E\left( \phi(Z_\infty) \ind{W \geq y} e^{- c' e^x Z_\infty} \right),
\]
which concludes the proof.
\end{proof}

In a second time, we prove \eqref{eqn:firstorder}. For any $n \in \N$, we write
\[
  \tau_n = \inf\{ k \geq n : M_k = R_n\}.
\]
We observe easily that $\frac{R_n}{\log n} \geq \frac{M_{\tau_n}}{\log n} \geq \frac{M_{\tau_n}}{\log \tau_n}$. Thus by \cite[Theorem 1.2]{HuS09}, we have $\liminf_{n \to +\infty} \frac{R_n}{\log n} \geq \liminf_{n \to +\infty} \frac{M_n}{n} = \frac{1}{2}$ a.s.

To prove the upper bound, we use \cite[Lemma 3.5]{Hu09}. There exists $c>0$ and $K>0$ such that for any $n \in \N$ and $0 \leq \lambda \leq \frac{\log n}{3}$, we have
\[
  \P\left( \exists u \in \T : |u| \in [n,2n], V(u) - \tfrac{1}{2} \log n + \lambda \in [0,K] \right) \geq ce^{-\lambda}.
\]
In particular $\P(R_n \leq \tfrac{1}{2} \log n + K) \geq c$. We conclude with a cutting argument.

By \cite[Lemma 2.4]{Mal16}, there exists $a>0$ and $\rho>1$ such that almost surely on $\{\#\T=+\infty\}$, for any $k$ large enough we have
\[
  \# \{ u \in \T : |u|=k, V(u) \leq k a \} \geq \rho^k.
\]
As each individual $u$ alive at time $k$ starts an independent branching random walk from position $V(u)$, for any $\epsilon>0$, we have almost surely on $\{\#\T = +\infty\}$, for all $n$ large enough,
\[
  \P\left( R_{n + \epsilon \log n} \geq (\tfrac{1}{2} + a\epsilon) \log n + K \middle| \calF_{\epsilon \log n} \right) \leq (1-c)^{\rho^{\epsilon \log n}}.
\]
We conclude by Borel-Cantelli lemma that $\limsup_{n \to +\infty} \frac{R_n}{\log n} \leq \frac{1}{2}$ a.s, which completes the proof of \eqref{eqn:firstorder}.

\bibliographystyle{alpha}

\begin{thebibliography}{ABR09}

\bibitem[ABR09]{ABR09}
L.~Addario-Berry and B.~A. Reed.
\newblock Minima in branching random walks.
\newblock {\em Ann. Probab.}, 37(3):1044--1079, 2009.

\bibitem[A{\"{\i}}d13]{Aid13}
E.~A{\"{\i}}d{\'e}kon.
\newblock Convergence in law of the minimum of a branching random walk.
\newblock {\em Ann. Probab.}, 41(3A):1362--1426, 2013.

\bibitem[AS10]{AiS10}
E.~A{\"{\i}}d{\'e}kon and Z.~Shi.
\newblock Weak convergence for the minimal position in a branching random walk:
  a simple proof.
\newblock {\em Period. Math. Hungar.}, 61(1-2):43--54, 2010.

\bibitem[BG11]{BeG11}
J.~B{\'e}rard and J.-B. Gou{\'e}r{\'e}.
\newblock Survival probability of the branching random walk killed below a
  linear boundary.
\newblock {\em Electron. J. Probab.}, 16:no. 14, 396--418, 2011.

\bibitem[Big76]{Big76}
J.~D. Biggins.
\newblock The first- and last-birth problems for a multitype age-dependent
  branching process.
\newblock {\em Advances in Appl. Probability}, 8(3):446--459, 1976.

\bibitem[BK04]{BiK04}
J.~D. Biggins and A.~E. Kyprianou.
\newblock Measure change in multitype branching.
\newblock {\em Adv. in Appl. Probab.}, 36(2):544--581, 2004.

\bibitem[Che15]{Che15}
X.~Chen.
\newblock A necessary and sufficient condition for the nontrivial limit of the
  derivative martingale in a branching random walk.
\newblock {\em Adv. in Appl. Probab.}, 47(3):741--760, 2015.

\bibitem[Ham74]{Ham74}
J.~M. Hammersley.
\newblock Postulates for subadditive processes.
\newblock {\em Ann. Probability}, 2:652--680, 1974.

\bibitem[HS09]{HuS09}
Y.~Hu and Z.~Shi.
\newblock Minimal position and critical martingale convergence in branching
  random walks, and directed polymers on disordered trees.
\newblock {\em Ann. Probab.}, 37(2):742--789, 2009.

\bibitem[Hu15]{Hu09}
Yueyun Hu.
\newblock The almost sure limits of the minimal position and the additive
  martingale in a branching random walk.
\newblock {\em J. Theoret. Probab.}, 28(2):467--487, 2015.

\bibitem[Jaf12]{Jaf11}
Bruno Jaffuel.
\newblock The critical barrier for the survival of branching random walk with
  absorption.
\newblock {\em Ann. Inst. Henri Poincar\'e Probab. Stat.}, 48(4):989--1009,
  2012.

\bibitem[Kin75]{Kin75}
J.~F.~C. Kingman.
\newblock The first birth problem for an age-dependent branching process.
\newblock {\em Ann. Probability}, 3(5):790--801, 1975.

\bibitem[Mad15]{Mad15}
T.~Madaule.
\newblock Convergence in {L}aw for the {B}ranching {R}andom {W}alk {S}een from
  {I}ts {T}ip.
\newblock {\em J. Theor. Probab.}, pages 1--37, 2015.
\newblock To appear.

\bibitem[Mad16]{Mad16}
T.~Madaule.
\newblock The tail distribution of the derivative martingale and the global
  minimum of the branching random walk.
\newblock {\em arXiv:1606.03211}, 2016+.

\bibitem[Mal16]{Mal16}
B.~Mallein.
\newblock Branching random walk with selection at critical rate.
\newblock {\em arXiv:1502.07390}, 2016+.

\end{thebibliography}

\end{document}